\documentclass[12pt]{amsart}
\usepackage{amsmath,amssymb,amsfonts,amsthm,amstext,verbatim}
\RequirePackage[colorlinks,citecolor=blue,urlcolor=blue]{hyperref}
\usepackage{url}
\usepackage{graphicx,psfrag}
\usepackage{accents}

\newcommand{\arx}[1]{\href{http://arxiv.org/abs/#1}{\nolinkurl{arxiv:#1}}}

\theoremstyle{plain}

\newtheorem{theorem}{Theorem}

\newtheorem{lemma}[theorem]{Lemma}

\numberwithin{equation}{section}

\newcounter{mycount}
\newenvironment{romlist}{\begin{list}{\rm(\roman{mycount})}%
   {\usecounter{mycount}\labelwidth=1cm\itemsep 0pt}}{\end{list}}

\newcommand\s{\sigma}

\newcommand\oo{\infty}

\newcommand\ZZ{{\mathbb Z}}

\newcommand\la{\lambda}

\newcommand\si{\sigma}
\newcommand\eps{\epsilon}

\newcommand\g{\gamma}
\newcommand\resp{respectively}
\newcommand\de{\delta}

\newcommand\Aut{\text{\rm Aut}}

\hyphenation{Cayley}
\newcommand\br{{\rm{br}}}
\newcommand\gr{{\rm{gr}}}
\newcommand\pc{p_{\rm c}}
\newcommand\muF{\mu^{{\rm F}}}
\newcommand\muB{\mu^{{\rm B}}}
\newcommand\muFB{\mu^{{\rm FB}}}
\newcommand\siF{\s^{{\rm F}}}
\newcommand\siB{\s^{{\rm B}}}
\newcommand\siFB{\s^{{\rm FB}}}

\newcommand\qq{\qquad}

\newcommand\df[1]{{\bf#1}}
\newcommand\hTF{\widehat T^{\rm F}}

\newcommand\ovG{\accentset{\leftarrow}{G}}
\newcommand\oovG{\accentset{\leftarrow}{\ovG}}

\newcommand{\comm}[1]{}

\newcommand\TF{T^{{\rm F}}}

\DeclareMathOperator{\stab}{Stab}

\begin{document}
\title
{Extendable self-avoiding walks}
\author[Grimmett]{Geoffrey R.\ Grimmett}
\address{(GRG) Statistical Laboratory, Centre for
Mathematical Sciences, Cambridge University, Wilberforce Road,
Cambridge CB3 0WB, UK}
\email{\href{mailto:grg@statslab.cam.ac.uk}{g.r.grimmett@statslab.cam.ac.uk}}
\urladdr{\url{http://www.statslab.cam.ac.uk/~grg/}}

\author[Holroyd]{Alexander E.\ Holroyd}
\address{(AEH) Microsoft Research, 1 Microsoft Way, Redmond, WA 98052, USA}
\email{\href{mailto:holroyd@microsoft.com}{holroyd at microsoft.com}}
\urladdr{\url{http://research.microsoft.com/~holroyd/}}

\author[Peres]{Yuval Peres}
\address{(YP) Microsoft Research, 1 Microsoft Way, Redmond, WA 98052, USA}
\email{\href{mailto:peres@microsoft.com}{peres@microsoft.com}}
\urladdr{\url{http://research.microsoft.com/~peres/}}

\begin{abstract}
The connective constant $\mu$ of a graph is the exponential growth rate of
the number of $n$-step self-avoiding walks starting at a given vertex.  A
self-avoiding walk is said to be \emph{forward} (\resp, \emph{backward})
\emph{extendable} if it may be extended forwards (\resp, backwards) to a
singly infinite self-avoiding walk. It is called \emph{doubly extendable} if
it may be extended in both directions simultaneously to a doubly infinite
self-avoiding walk. We prove that the connective constants for forward,
backward, and doubly extendable self-avoiding walks, denoted \resp\ by
$\muF$, $\muB$, $\muFB$, exist and satisfy $\mu=\muF=\muB=\muFB$ for every
infinite, locally finite, strongly connected, quasi-transitive directed
graph. The proofs rely on a 1967 result of Furstenberg on dimension, and
involve two different arguments depending on whether or not the graph is
unimodular.
\end{abstract}

\date{25 July 2013 (revised 12 November 2013)}  

\keywords{self-avoiding walk, connective constant, transitive graph,
quasi-transitive graph, unimodular graph, growth, branching number}
\subjclass[2010]{05C30, 82B20, 60K35}

\maketitle

\section{Introduction}\label{sec:intro}

Let $G=(V,E)$ be an infinite, strongly connected, locally finite, directed
graph (possibly with parallel edges), and let $\s_n(v)$ be the number of
$n$-step self-avoiding walks (SAWs) on $G$ starting at the vertex $v \in V$
and directed away from $v$.  Hammersley proved in 1957 \cite{jmhII} that the
limit
\begin{equation}\label{mudef}
\mu := \lim_{n \to\oo} \Bigl(\sup_{v \in V}  \s_n(v)\Bigr)^{1/n}
\end{equation}
exists, and that if $G$ is quasi-transitive then
\begin{equation}\label{connconst}
\lim_{n \to \oo} \s_n(v)^{1/n} = \mu \qquad \text{for all }v \in V.
\end{equation}

The constant $\mu=\mu(G)$ is called the \df{connective constant} of $G$. Note
that \eqref{mudef} is not necessarily the natural definition of connective
constant for a general graph, see \cite{GrL1,Lac}.  There is no loss of
generality in restricting attention to directed graphs, since each edge of an
undirected graph may be interpreted as a pair of edges with opposite
orientations.

The purpose of this article is to study the growth
rates of the numbers of $n$-step SAWs from $v$ that are extendable
to infinite SAWs at one or both of their ends.

Let $w$ be an $n$-step directed SAW starting at a vertex $v$.
We call $w$ \df{forward extendable} if it is an initial segment of
some singly infinite directed SAW from $v$.
We call $w$ \df{doubly extendable} if it is a sub-walk of some doubly
infinite directed SAW passing through $v$.
We call $w$ \df{backward extendable} if it is the final segment of
some singly infinite directed SAW from infinity, passing through $v$
and ending at the other endpoint of $w$.
Let $\siF_n(v)$, $\siB_n(v)$, and $\siFB_n(v)$ denote the numbers of forward,
backward, and
doubly extendable $n$-step SAWs from $v$, respectively.
We define $\muF$, $\muB$, and $\muFB$ analogously to \eqref{mudef} whenever the limits exist.

\begin{theorem}\label{main}
Let $G$ be an infinite, locally finite, strongly connected, quasi-transitive directed graph.
\begin{romlist}
\item The limits $\muF$, $\muB$, $\muFB$ exist and satisfy $\mu=\muF=\muB=\muFB$.
\item We have
\begin{equation}\label{connconst2}
\lim_{n \to \oo} \siF_n(v)^{1/n} = \muF \qq \text{for all }v \in V.
\end{equation}
\end{romlist}
\end{theorem}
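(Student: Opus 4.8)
I would begin with the trivial inclusions: a doubly extendable SAW is forward extendable and a forward extendable SAW is a SAW, so $\siFB_n(v)\le\siF_n(v)\le\s_n(v)$, and likewise with $\siB_n$. Reversing all edge orientations turns $G$ into a graph $\Grev$ with the same hypotheses and $\mu(\Grev)=\mu$, while interchanging forward and backward extendability, so it suffices to handle $\muF$ and $\muFB$. For existence of the limits: truncating a forward (resp.\ doubly) extendable SAW of length $m+n$ after $m$ steps leaves one of the same type, whose last $n$ steps are again of the same type from the intermediate vertex; this gives $\sup_v\siF_{m+n}(v)\le\sup_v\siF_m(v)\cdot\sup_v\siF_n(v)$ and the same for $\siFB$, so by Fekete the constants $\muF,\muB,\muFB$ exist and, by the inclusions above, $\muFB\le\muF\le\mu$ and $\muFB\le\muB\le\mu$. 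It then remains to prove $\muF=\mu$ together with part~(ii), and $\muFB\ge\mu$.

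\textbf{The tree picture.} Fix $v$ and let $T_v$ be the tree of SAWs from $v$, each SAW being a child of the SAW obtained by deleting its last step; then $\s_n(v)$ is the size of level $n$, so by \eqref{connconst} the growth rate is $\gr(T_v)=\mu$. A SAW is forward extendable exactly when it lies on an infinite ray, so the forward extendable SAWs from $v$ form the subtree $\TF_v$ obtained from $T_v$ by pruning all finite branches, with $\siF_n(v)$ the size of its level $n$; similarly the doubly extendable SAWs form a further subtree $T^{\rm FB}_v\subseteq\TF_v$. The argument will rest on two facts about trees: pruning finite branches does not change the branching number, so $\br(\TF_v)=\br(T_v)$; and Furstenberg's 1967 dimension theorem, in the tree formulation of Lyons, which asserts that a subperiodic tree has a growth limit equal to its branching number, with the analogous statement for a finite family of trees each of whose descendant subtrees embeds, preserving adjacency, into some member of the family with image-root at a bounded level.

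\textbf{The forward (hence backward) constant.} I claim the family $\{T_{v_i}\}$, and also $\{\TF_{v_i}\}$, indexed by orbit representatives $v_1,\dots,v_r$, satisfies the hypothesis of the family version of the cited theorem with level-bound $0$. Indeed, the descendant subtree below a SAW $w$ from $v_i$ ending at $u$ consists of the SAWs from $u$ that avoid $w$ except at $u$; these are in particular SAWs from $u$ in $G$, and an automorphism taking $u$ to its representative $v_j$ carries them injectively into $T_{v_j}$ rooted at the root. The same map serves for the forward extendable SAWs, since a SAW that extends to an infinite SAW inside $G$ with finitely many vertices removed also extends to one inside $G$. Hence $\gr(\TF_v)=\br(\TF_v)=\br(T_v)=\gr(T_v)=\mu$; in particular $\siF_n(v)^{1/n}\to\mu$ for every $v$, which is part~(ii) and yields $\muF=\mu$, and applying this to $\Grev$ gives $\muB=\mu$.

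\textbf{The doubly extendable constant, and the main obstacle.} This scheme breaks for $T^{\rm FB}_v$, because being a sub-walk of a bi-infinite SAW involves a semi-infinite backward tail that both constrains and is constrained by the forward continuations, so the descendant subtrees of $T^{\rm FB}_v$ are not ``extendable SAWs in a subgraph'' and subperiodicity fails. Here the plan is to split on whether $\Aut(G)$ acts unimodularly. In the unimodular case I would run a mass-transport argument: via the mass-transport principle, forward extendability at a vertex is balanced against backward extendability, and one concludes that $T^{\rm FB}_v$ carries as much growth as $\TF_v$, so $\muFB\ge\muF=\mu$. In the non-unimodular case this balance genuinely fails, and one instead exploits the modular function $\De$: along SAWs drifting toward increasing $\De$ the products of modular values accumulate, and Furstenberg's theorem in its Diophantine guise — a closed subset of the circle invariant under two multiplicatively independent maps is finite or all of it — is invoked to control these accumulations and force $\gr(T^{\rm FB}_v)=\mu$. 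I expect this non-unimodular case to be the crux: identifying precisely how the backward tail interacts with the non-unimodular scaling, and extracting from it the invariance needed to apply Furstenberg's theorem, is the delicate step, whereas the unimodular mass-transport and the forward-case subperiodicity should be comparatively routine.
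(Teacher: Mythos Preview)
Your tree argument for $\muF=\mu$ and for part~(ii) is essentially the paper's: glue the SAW trees over orbit representatives, apply Furstenberg's subperiodic-tree theorem, and use that pruning finite bushes preserves the branching number. That part is fine.

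The reduction via $\Grev$, however, does not work. A backward-extendable $n$-step SAW from $v$ in $G$ reverses to a forward-extendable $n$-step SAW in $\Grev$ \emph{from the other endpoint}, not from $v$. Thus $\siB_n(v,G)$ equals the number of forward-extendable $n$-SAWs in $\Grev$ \emph{ending} at $v$, which need not agree with $\siF_n(v,\Grev)$ (the count by \emph{starting} point). Equating these two endpoint-partitions of the same set of walks is exactly what the mass-transport principle delivers, and only under unimodularity; this is the content of the paper's Lemma on $\siF_n(G)$ versus $\siB_n(\Grev)$. For the same reason your opening claim $\mu(\Grev)=\mu$ is not free. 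So ``applying $\muF=\mu$ to $\Grev$ gives $\muB=\mu$'' is a gap, not a reduction. (Incidentally, $\muB=\muFB$ follows directly by your own tree argument applied to the backward SAW tree---pruning its finite bushes yields exactly the doubly-extendable tree---with no reversal needed; this is what the paper does.)

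The non-unimodular plan is off-target. The relevant Furstenberg input throughout is only the subperiodic-tree theorem; his Diophantine $\times p,\times q$ theorem about invariant subsets of the circle plays no role, and there is no evident way to encode the backward-tail constraint as such an invariance. What non-unimodularity actually provides is a nonconstant weight function $M$: choosing $u_0,u_1$ in the same orbit with $M(u_1)/M(u_0)=c>1$ and iterating an automorphism sending $u_0\mapsto u_1$ produces, after loop-erasure, a doubly infinite \emph{quasi-geodesic} with edges directed inward toward a central vertex $v_0$. One then runs a Hammersley-style counting argument: split an arbitrary SAW from $v_0$ at its intersections with the two halves of this quasi-geodesic, observe that each piece is backward extendable along the geodesic, and bound $\sigma_n(v_0)$ by a polynomial times powers of $\muB$, yielding $\mu\le\muB$. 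This combinatorial/geometric step, not a second Furstenberg theorem, is the crux in the non-unimodular case.
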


The analogue of \eqref{connconst2} does not hold in general for backward or
doubly extendable walks.  For example, if $v$ has only one neighbour (joined
to it by edges in both directions) then $\siB_n(v)$ and $\siFB_n(v)$ are both
$0$ for $n\geq 1$.

A principal ingredient of the proof of Theorem~\ref{main} is
a result of Furstenberg \cite[Prop.\ III.1]{Fur67} from 1967;
a recent exposition appears in \cite[Sect.\ 3.3]{LyP}.
The same method provides an alternative
proof of Hammersley's result \eqref{connconst}.

Theorem~\ref{main}(i) states that the exponential growth rates of counts of
SAWs coincide for the four types of SAW under consideration. One may ask also
about more refined asymptotic properties. Suppose $G$ satisfies the
conditions of Theorem~\ref{main}, and is for simplicity vertex-transitive.
The sub-multiplicativity of SAW-counts (see the proof of Lemma~\ref{half}
below) gives that $\sigma_n^\bullet \ge (\mu^\bullet)^n$ for each of
$\bullet\in\{\;\;, \mathrm{F}, \mathrm{B}, \mathrm{FB}\}$.  Therefore,
whenever it is known that $\si_n \le A \mu^n$ for some $A<\oo$, it follows
that $A^{-1} \le \si^\bullet_n/\si_n \le 1$. This is indeed the situation for
the (undirected) integer lattice $\ZZ^d$ with $d \ge 5$, by
\cite[Thm~1.1(a)]{HSa}.  We do not know whether $\sigma_n^\bullet$ and
$\sigma_n$ agree up to a multiplicative constant for every $G$ satisfying the
conditions of Theorem~\ref{main}.  The square lattice $\ZZ^2$ is a
particularly interesting case.

In the case $G = \ZZ^d$ with $d \ge 2$, the method of `bridges' developed by
Hammersley and Welsh \cite{HW62} immediately gives the results of
Theorem~\ref{main}, and furthermore shows that $\si^\bullet_n/\si_n \geq
\exp(-c\sqrt{n})$ for some $c=c(d)>0$.  An interesting related
notion of `endless SAWs' is studied in \cite{clisby}.

The proof of Theorem~\ref{main} is divided into several parts. The proofs of
the equalities $\mu=\muF$ and $\muB=\muFB$ of part (i) use a result of
Furstenberg, namely that a subperiodic tree has growth rate equal to its
branching number.  This is applied to certain trees constructed from the sets
of SAWs (of the various types) from a given vertex, and it is argued that the
branching numbers of the appropriate pairs of trees coincide.  See
Section~\ref{sec:trees}.  A related argument gives part (ii).  The remaining
equality is proved by two different arguments depending on whether or not $G$
is unimodular.  In the unimodular case, a mass-transport argument yields
$\muF=\muB$ (Section~\ref{sec:unimodular}), while in the non-unimodular case
(Section~\ref{sec:geo}) we show the existence of a `quasi-geodesic' of a
certain type, and employ a counting argument related to Hammersley's methods
of \cite{jmhII} to obtain $\mu=\muB$. In Section~\ref{sec:def} below we
define the various concepts referred to above. Many of our arguments can be
simplified if $G$ is undirected and/or transitive, and we indicate such
simplifications where appropriate.

\section{Preliminaries}\label{sec:def}

In this section we introduce terminology and our main tools. Let $G=(V,E)$ be
a directed graph, possibly with parallel edges. We
call $G$ \df{locally finite} if each vertex has finite in-degree and
out-degree.

An \df{automorphism} is
a bijection $\g:V\to V$ such that, for all $v,w\in V$, the number of edges that are directed from $v$ to
$w$ equals the number that are directed from $\g(v)$ to $\g(w)$. The automorphisms of
$G$ form the \df{automorphism group} $\Aut(G)$.
The orbits of $V$ under $\Aut(G)$ are called \df{transitivity
classes}, and $G$ is \df{transitive} if it has only one transitivity class,
or \df{quasi-transitive} if it has finitely many transitivity classes.

A \df{walk} $w$ consists of a sequence of vertices $(v_i)_{m< i< n}$ together
with edges $(e_i)_{m<i<i+1<n}$, where $e_i$ is a directed edge from $v_i$ to
$v_{i+1}$, and where $-\infty\leq m\leq n\leq\infty$.  The length $|w|$ of
$w$ is the number of its edges. The walk is \df{singly infinite} if either
$m\in\ZZ$ and $n=\oo$ or $m=-\oo$ and $n\in\ZZ$, and \df{doubly infinite} if
$m=-\oo$ and $n=\oo$. A graph $G$ is \df{strongly connected} if for every
pair $u,v\in V$ there exist finite walks from $u$ to $v$ and from $v$ to $u$.

A \df{self-avoiding walk} (SAW) on $G$ is a walk all of
whose vertices are distinct.
SAWs may be finite, singly infinite, or doubly infinite.
Let $\s_n(v)=\s_n(v,G)$ be the number of length-$n$ SAWs
 starting at $v\in V$.
In the presence of parallel edges, two SAWs with identical vertex-sequences
but different edge-sequences are considered distinct. We write
$\s_n=\s_n(G):=\sup_{v\in V} \s_n(v)$, and denote
$$
\mu=\mu(G):=\lim_{n\to\infty} \s_n^{1/n},
$$
whenever the limit exists.
Forward, backward, and doubly extendable SAWs are defined as in the
introduction, and the quantities $\siF_n(v)$, $\siF_n$, 
$\muF$, etc., are defined analogously.

We turn now to certain elements in the study of trees, for which we follow
\cite[Chap.~3]{LyP}. Let $T=(W,F)$ be an infinite, locally finite, 
undirected tree with
root $o$. For $v \in W$, the distance between $v$ and $o$ is written $|v|$.
For $e=\langle v_1,v_2\rangle\in F$, let $|e| = \max\{|v_1|, |v_2|\}$. Let
$W_n =\{v \in W: |v| = n\}$ be the set of vertices at \df{level} $n$. A
\df{cutset} is a minimal set of edges whose removal leaves $o$ in a finite
component. Since $T$ is assumed locally finite, cutsets are finite.

There are two natural notions of dimension of a tree $T$.  The \df{growth} is
given by
$$
\gr(T) := \lim_{n\to\oo} |W_n|^{1/n},
$$
whenever this limit exists.
In any case, the lower growth and upper growth are given respectively by
$$
\underline{\gr}(T) := \liminf_{n\to\oo} |W_n|^{1/n},\qq
\overline{\gr}(T) := \limsup_{n\to\oo} |W_n|^{1/n}.
$$
A more refined notion is the \df{branching number}
\begin{equation}\label{def:br}
\br(T) := \sup\biggl\{\la: \inf_\Pi \sum_{e \in \Pi} \la^{-|e|}>0 \biggr\},
\end{equation}
where the infimum is over all cutsets $\Pi$ of $T$.
One interesting property, which is sometimes helpful for intuition,
is that the critical probability $\pc(T)$ of bond percolation on $T$ satisfies
\begin{equation}\label{perc}
\pc(T)=1/\br(T).
\end{equation}
See \cite[Thm~6.2]{Lyons90} or \cite[Thm~5.15]{LyP}.

The growth and branching number of a general tree need not be equal (and indeed the
growth need not exist).  However, we have the following inequality \cite[eqn (1.1)]{LyP}.
We include a proof for the reader's convenience.

\begin{lemma}\label{lem:1}
For any locally finite, infinite rooted tree, $\br(T) \le \underline\gr(T)$.
\end{lemma}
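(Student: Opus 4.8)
The plan is to prove the equivalent statement that $\la\le\underline{\gr}(T)$ for every $\la<\br(T)$. Fix such a $\la$. Since $T$ is infinite and locally finite, $W_n\ne\es$ for every $n$ (otherwise $T$ would be contained in the finite set $W_0\cup\cdots\cup W_{n-1}$), so $|W_n|\ge 1$ and hence $\underline{\gr}(T)\ge 1$; in particular the claim is trivial when $\la\le 1$, and we may assume $\la>1$. For each fixed edge $e$ the map $\la\mapsto\la^{-|e|}$ is nonincreasing on $[1,\oo)$, so the set of parameters $\la$ for which $\inf_\Pi\sum_{e\in\Pi}\la^{-|e|}>0$ is a subinterval of $[1,\oo)$, the infimum being over all cutsets $\Pi$ of $T$. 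As $\la$ lies strictly below the supremum $\br(T)$ of this set, \eqref{def:br} gives
$$
c:=\inf_\Pi\,\sum_{e\in\Pi}\la^{-|e|}>0 .
$$

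Next I would test this inequality against a convenient family of cutsets, one per level. For $n\ge 1$ let $\Pi_n:=\{e\in F:|e|=n\}$, the set of edges joining $W_{n-1}$ to $W_n$. Deleting $\Pi_n$ from $T$ disconnects $o$ from all vertices at levels $\ge n$, leaving $o$ in the component $W_0\cup\cdots\cup W_{n-1}$, which is finite by local finiteness; hence $\Pi_n$ contains a cutset $\Pi_n'$. Moreover each edge of $\Pi_n$ is the unique edge joining some $v\in W_n$ to its parent, so $e\mapsto v$ is a bijection $\Pi_n\to W_n$ and $|\Pi_n|=|W_n|$. Therefore
$$
c\le\sum_{e\in\Pi_n'}\la^{-|e|}\le\sum_{e\in\Pi_n}\la^{-|e|}=|W_n|\,\la^{-n}.
$$

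Rearranging then gives $|W_n|\ge c\,\la^{n}$, whence $|W_n|^{1/n}\ge c^{1/n}\la$; letting $n\to\oo$ and using $c^{1/n}\to 1$ yields $\underline{\gr}(T)=\liminf_{n\to\oo}|W_n|^{1/n}\ge\la$. Since $\la<\br(T)$ was arbitrary, $\underline{\gr}(T)\ge\br(T)$. The argument is essentially bookkeeping and I expect no genuine obstacle; the two points that deserve a line of care are that the parameter set in \eqref{def:br} really is an interval (so that $\la<\br(T)$ does yield $c>0$, rather than merely a positive value for some witnessing sequence), and that the level set $\Pi_n$, though possibly not a \emph{minimal} cutset, dominates a genuine cutset and hence may legitimately be compared with $c$.
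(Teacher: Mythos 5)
Your proof is correct and uses essentially the same idea as the paper: compare $\br(T)$ with the contribution of the level-$n$ cutsets, yielding the estimate $\inf_\Pi\sum_{e\in\Pi}\la^{-|e|}\le|W_n|\la^{-n}$, and then optimize over $n$. You argue in the direct direction (fix $\la<\br(T)$ and deduce $|W_n|\ge c\la^n$), whereas the paper argues the contrapositive (fix $\la>\underline\gr(T)$ and exhibit a subsequence with $|W_n|\la^{-n}\to 0$); the two are logically equivalent, and your added remarks about the parameter set being an interval and about passing to a minimal cutset inside $\Pi_n$ are correct points of care that the paper leaves implicit.
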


\enlargethispage*{1cm}
\begin{proof}
Let $\la>\underline\gr(T)$.
Taking $\Pi$ as the set of edges joining $W_{n-1}$ and $W_n$, we have that
$$
\inf_\Pi\sum_{e \in \Pi} \la^{-|e|}\le |W_n| \la^{-n}.
$$
There exists a subsequence $(n_i)$ along which the last term tends to zero.
\end{proof}

Furstenberg \cite{Fur67} gave a condition under which branching number and
growth do coincide. For $w \in W$, denote by $T^w$ the sub-tree of $T$
comprising $w$ and its descendants, considered as a rooted tree with root
$w$. Let $N \ge 0$. The tree $T$ is called \df{$N$-subperiodic} if, for all
$w \in W$, there exists $w'$ with $|w'|\leq N$ such that there is an
injective graph homomorphism from $T^w$ to $T^{w'}$ mapping $w$ to $w'$. If
$T$ is $N$-subperiodic for some $N$, we call $T$ \df{subperiodic}.

\begin{theorem}[Furstenberg \cite{Fur67}]\label{furst}
Let $T$ be an infinite, locally finite, root\-ed tree.  If $T$ is subperiodic then
$\gr(T)$ exists and equals $\br(T)$.
\end{theorem}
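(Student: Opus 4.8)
The plan is to prove the two inequalities $\br(T) \le \underline\gr(T) \le \ol\gr(T) \le \br(T)$ separately, of which the first is already Lemma~\ref{lem:1} and requires no subperiodicity. The content is thus entirely in the bound $\ol\gr(T) \le \br(T)$, i.e.\ in showing that for $\lambda < \br(T)$ one has $|W_n| \le C \lambda^n$ eventually (indeed for all $n$, after adjusting $C$). Fix $N$ so that $T$ is $N$-subperiodic, and fix $\lambda$ with $\lambda < \br(T)$; choose an auxiliary $\lambda'$ with $\lambda < \lambda' < \br(T)$. By the definition \eqref{def:br} of the branching number, there is a constant $c>0$ such that $\sum_{e\in\Pi}(\lambda')^{-|e|} \ge c$ for every cutset $\Pi$ of $T$.

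First I would set up a subadditive/renewal-type recursion for the quantity $a_n := |W_n|(\lambda')^{-n}$, or rather for a smoothed version of it, exploiting subperiodicity. The key point: for each vertex $w$ at level $k$, the subtree $T^w$ injects into $T^{w'}$ for some $w'$ with $|w'|\le N$, so the number of descendants of $w$ at distance $m$ below $w$ is at most the number of vertices of $T$ at level $|w'|+m \le N+m$. Summing over $w\in W_k$ gives a rough submultiplicativity $|W_{k+m}| \le |W_k| \cdot \max_{j\le N}|W_{j+m}|$, but this alone only yields existence of $\gr(T)$ (and reproves Hammersley's \eqref{connconst} in spirit), not the identification with $\br(T)$. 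To get the branching number into the picture I would instead argue by contradiction: suppose $\ol\gr(T) > \lambda'$, so $|W_{n_i}| > (\lambda')^{n_i}$ along some subsequence. The strategy is to manufacture from this growth a cutset $\Pi$ with $\sum_{e\in\Pi}(\lambda')^{-|e|}$ arbitrarily small, contradicting the lower bound $c$; the cutset is built by ``stopping'' each ray at the first level where the local descendant-count, measured via the subperiodic copy near the root, fails to keep pace with $(\lambda')$, and subperiodicity guarantees such a stopping level exists uniformly because the whole tree looks (after injection) like a bounded piece near $o$.

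The cleanest route, and the one I would actually write, is to quote the flow/cutset duality: by the Max-Flow Min-Cut theorem on the (locally finite) tree, $\inf_\Pi \sum_{e\in\Pi}\theta(e)$ for $\theta(e) := (\lambda')^{-|e|}$ equals the maximum strength of a unit flow from $o$ to infinity subject to the capacity constraint $f(e)\le (\lambda')^{-|e|}$ on each edge. Since this infimum is $\ge c > 0$, there exists a nonzero flow $f$ from $o$ to $\infty$ with $f(e) \le (\lambda')^{-|e|}$ for all $e$. Now push this flow down the tree and use subperiodicity to propagate a lower bound: because $T^w$ embeds in a bounded-level subtree, the flow cannot be too concentrated, and a compactness/averaging argument over the finitely many ``root neighbourhoods'' $T^{w'}$ with $|w'|\le N$ upgrades the mere positivity of the flow to a uniform lower bound on the flow out of each level, which forces $|W_n| \le (\text{const})\,(\lambda')^n$, hence $\ol\gr(T)\le\lambda' $. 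Letting $\lambda'\downarrow\br(T)$... (here one instead fixes $\lambda'<\br(T)$ and lets it increase) gives $\ol\gr(T)\le\br(T)$. Combined with Lemma~\ref{lem:1}, $\underline\gr(T)=\ol\gr(T)=\br(T)$, so $\gr(T)$ exists and equals $\br(T)$.

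The main obstacle is the last step: converting ``there exists a positive flow respecting the capacities $(\lambda')^{-|e|}$'' into a \emph{uniform} per-level lower bound on flow, which is exactly where subperiodicity must be used in an essential (not merely cosmetic) way. The honest execution packages this as follows: let $m_w$ be the total flow entering the subtree $T^w$; subperiodicity gives an injection $T^w\hookrightarrow T^{w'}$, and one shows that the ratio $m_w / (\lambda')^{-|w|}$ is bounded below by a positive constant depending only on the finitely many possible $w'$ (those at level $\le N$) and on $c$ — essentially because if this ratio were tiny for some $w$, one could restrict the flow to $T^w$, rescale, and contradict the min-cut bound $c$ applied to the cutsets of $T^{w'}$. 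I expect this rescaling/compactness argument to be the delicate point; everything else (Lemma~\ref{lem:1} for one inequality, Max-Flow Min-Cut for the duality, the squeeze to conclude existence of $\gr$) is routine. For the write-up I would follow the exposition in \cite[Sect.~3.3]{LyP}, which organizes precisely this flow-based argument.
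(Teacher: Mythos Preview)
The paper does not supply its own proof of this theorem; it simply quotes the result and refers to \cite[Prop.~III.1]{Fur67} and \cite[Sect.~3.3]{LyP}. So there is no in-paper argument to compare against. Your sketch, however, has a genuine error of direction that would prevent it from being completed.

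You correctly isolate the content as $\ol\gr(T)\le\br(T)$, but then fix $\la'<\br(T)$ and attempt to deduce $|W_n|\le C(\la')^n$. That conclusion is impossible: together with Lemma~\ref{lem:1} it would give $\br(T)\le\underline\gr(T)\le\ol\gr(T)\le\la'<\br(T)$. The same reversal runs through the flow version. A nonzero flow $f$ with $f(e)\le(\la')^{-|e|}$ yields, by summing over the edges at level $n$, the \emph{lower} bound $|W_n|\ge c(\la')^{n}$; it cannot produce an upper bound on $|W_n|$. Likewise, your ``contradiction'' hypothesis $\ol\gr(T)>\la'$ is automatically true once $\la'<\br(T)$ and Lemma~\ref{lem:1} are in hand, so there is nothing to contradict.

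The correct argument runs the other way: one starts from the growth and \emph{builds} a flow, thereby bounding $\br(T)$ from below. With $\la:=\ol\gr(T)$, $N$-subperiodicity first gives the uniform bound $|W_n|\le\la^{\,n+N}$ for every $n$ (this is where the finitely many root-neighbourhood types $T^{w'}$, $|w'|\le N$, enter). Then, for $n$ along a subsequence with $|W_n|\ge\la^{n}$, spread unit mass uniformly over $W_n$ and pull back to a flow on the first $n$ levels; the flow through an edge at level $k$ equals (descendants in $W_n$)$/|W_n|$, and subperiodicity bounds the numerator by some $|W_{n-k+j}|\le\la^{\,n-k+j+N}$ with $j\le N$. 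A subsequential limit (compactness from local finiteness) gives a nonzero flow with $f(e)\le C\la^{-|e|}$, whence $\br(T)\ge\la=\ol\gr(T)$. This is exactly the route in \cite[Sect.~3.3]{LyP}; your final sentence points there, but the body of the proposal has the logic inverted.
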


For a proof see \cite[Prop.\ III.1]{Fur67} or \cite[Sec.~3.3]{LyP}.

Finally in this section we introduce the notions of unimodularity and the
mass-transport principle on graphs; more details may be found in \cite{BLPS}
and \cite[Chap.~8]{LyP}. Let $G$ be an infinite, locally finite, strongly
connected, quasi-transitive directed graph.  The stabiliser $\stab(u)$ of a
vertex $u$ is the set of automorphisms that preserve $u$, and $\stab(u)v$
denotes the orbit of a vertex $v$ under this set.  We may define a positive
\df{weight} function $M:V\to(0,\infty)$  via
\begin{equation}\label{weight}
\frac{M(u)}{M(v)}=\frac{|\stab(u) v|}{|\stab(v) u|},\qq u,v \in V,
\end{equation}
where $|\cdot|$ denotes cardinality.
The function $M$ is uniquely defined up to
multiplication by a constant, and is
automorphism-invariant up to multiplication
by a constant.
The graph $G$ is called \df{unimodular} if $M$ is constant on each transitivity class.
The following fact is very useful.

\begin{theorem}[Mass-transport principle]\label{mt}
Let $G=(V,E)$ be an infinite, locally finite, strongly connected,
quasi-transitive directed graph with weight function $M$. Suppose that $G$ is
unimodular, and let $S$ be a set comprising a representative from each
transitivity class of $G$.  If $m:V\times V\to [0,\infty]$ satisfies $m(\phi
u,\phi v)=m(u,v)$ for all $u,v\in V$ and every automorphism $\phi$ of $G$,
then
$$
\sum_{\substack{s\in S,\\v\in V}} M(s)^{-1} m(s,v)=
\sum_{\substack{s\in S,\\v\in V}} M(s)^{-1} m(v,s).
$$
\end{theorem}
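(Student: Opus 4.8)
The plan is to prove Theorem~\ref{mt} by decomposing each of the two sums according to the orbits of the diagonal action of the automorphism group $\Gamma := \Aut(G)$ on $V \times V$, and showing that a given orbit contributes the same amount to both sides; the per-orbit identity will reduce to the defining relation \eqref{weight} for the weight function $M$. Before starting I would record the elementary fact that every orbit $\stab(u)v$ is finite: automorphisms preserve edge orientations and hence directed distances, so $\stab(u)v$ is contained in the set of vertices at a prescribed directed distance from $u$, which is finite by local finiteness and strong connectivity. In particular the cardinalities appearing in \eqref{weight} are finite positive integers and $M$ is a genuine function $V\to(0,\infty)$.

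Fix an orbit $\mathcal O$ of $\Gamma$ on $V\times V$ and choose a base pair $(u,v)\in\mathcal O$. The key structural observations are: (a) $m$ is constant on $\mathcal O$, with common value $m_{\mathcal O}$, say; (b) writing $\pi_1,\pi_2$ for the two coordinate projections, $\pi_1(\mathcal O)$ is exactly the transitivity class of $u$, and for each $u'=\alpha u$ in it the fibre $\{v':(u',v')\in\mathcal O\}$ equals $\alpha\bigl(\stab(u)v\bigr)$, so has cardinality $|\stab(u)v|$ independently of $u'$; symmetrically $\pi_2(\mathcal O)$ is the transitivity class of $v$, and every fibre $\{u':(u',v')\in\mathcal O\}$ over $v'\in\pi_2(\mathcal O)$ has cardinality $|\stab(v)u|$. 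These are routine orbit--stabiliser computations.

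Since all terms are non-negative, the order of summation may be interchanged freely. Grouping $\sum_{v\in V} m(s,v)$ by the orbit containing $(s,v)$ and applying (a) and (b), the left-hand side of Theorem~\ref{mt} becomes $\sum_{\mathcal O} M(s_1(\mathcal O))^{-1}\,|\stab(u_{\mathcal O})v_{\mathcal O}|\,m_{\mathcal O}$, where $(u_{\mathcal O},v_{\mathcal O})$ is the chosen base pair of $\mathcal O$ and $s_1(\mathcal O)$ is the unique element of $S$ lying in the transitivity class $\pi_1(\mathcal O)$; similarly the right-hand side equals $\sum_{\mathcal O} M(s_2(\mathcal O))^{-1}\,|\stab(v_{\mathcal O})u_{\mathcal O}|\,m_{\mathcal O}$ with $s_2(\mathcal O)\in S$ in the class $\pi_2(\mathcal O)$. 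Comparing term by term, it suffices to show $M(s_1(\mathcal O))^{-1}|\stab(u_{\mathcal O})v_{\mathcal O}| = M(s_2(\mathcal O))^{-1}|\stab(v_{\mathcal O})u_{\mathcal O}|$. This is where unimodularity is used: since $M$ is constant on transitivity classes, $M(s_1(\mathcal O))=M(u_{\mathcal O})$ and $M(s_2(\mathcal O))=M(v_{\mathcal O})$, so the required identity is exactly $M(u_{\mathcal O})/M(v_{\mathcal O}) = |\stab(u_{\mathcal O})v_{\mathcal O}|/|\stab(v_{\mathcal O})u_{\mathcal O}|$, which is \eqref{weight}.

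The only delicate points are the bookkeeping in (b) — verifying that the fibre cardinalities are orbit-invariant and correctly identified as $|\stab(u)v|$ and $|\stab(v)u|$ — and the observation that unimodularity genuinely enters, namely to replace the value of $M$ at the orbit representative $s_i(\mathcal O)$ by its value at the base point; without it the per-orbit identity can fail, and indeed the conclusion itself is false for non-unimodular $G$. Everything else is formal manipulation of non-negative series, so I expect the write-up to be short. When $G$ is transitive the set $S$ is a single vertex, $M$ is constant, and the argument simplifies accordingly.
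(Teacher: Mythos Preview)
Your proposal is correct: the orbit decomposition of the diagonal $\Gamma$-action on $V\times V$, together with the fibre-count identification $|\{v':(u',v')\in\mathcal O\}|=|\stab(u)v|$ and the invocation of \eqref{weight} under unimodularity, is exactly the standard proof of the mass-transport principle for quasi-transitive graphs. The paper does not actually prove Theorem~\ref{mt} but simply cites \cite[Thm~8.10, Cor.~8.11]{LyP} (applied to the underlying undirected graph with the automorphism group of the directed graph); your argument is essentially the one found there, so there is nothing to contrast.
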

For proofs of Theorem~\ref{mt} and the immediately preceding assertions see
e.g.\ \cite[Thm~8.10, Cor.~8.11]{LyP}.
To obtain the above formulation, the
results of \cite{LyP} are applied to the undirected graph $G'$ obtained
from $G$ by
ignoring edge orientations, with the automorphism group of the directed graph $G$.  Thus the assumption (in Theorem \ref{mt})
of strong connectivity
of $G$ may be weakened to that of connectivity of $G'$, in which case
we say that $G$ is \df{weakly connected}.

\section{SAW trees}\label{sec:trees}

The proof of Theorem~\ref{main} is divided into several parts.
We start by establishing two of the inequalities required for part (i).

\begin{lemma}\label{half}
Under the assumptions of Theorem~\ref{main}, the limits $\muF$, $\muB$, $\muFB$
exist and satisfy $\mu=\muF$ and $\muB=\muFB$.
\end{lemma}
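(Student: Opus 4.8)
The plan is to build, from the four families of SAWs, rooted trees to which Furstenberg's theorem (Theorem~\ref{furst}) applies, and then to show that the appropriate pairs of trees share a branching number. Fix representatives $s_1,\dots,s_k$ of the (finitely many) transitivity classes. For a vertex $s$ let $T(s)$ be the rooted tree whose root is the $0$-step walk at $s$, whose level-$n$ vertices are the $n$-step SAWs from $s$, and in which each SAW is joined to each of its one-step forward extensions; let $T^{\mathrm F}(s)$, $T^{\mathrm B}(s)$, $T^{\mathrm{FB}}(s)$ be the subtrees obtained by retaining only forward, backward, \resp\ doubly extendable SAWs. As a prefix of an extendable SAW is extendable of the same kind, these are genuine, locally finite rooted subtrees, with $T^{\mathrm{FB}}(s)\subseteq T^{\mathrm B}(s)\subseteq T(s)$ and $T^{\mathrm F}(s)\subseteq T(s)$. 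Let $T$ be the tree formed by joining a new root $\rho$ to the roots of $T(s_1),\dots,T(s_k)$, and define $T^{\mathrm F},T^{\mathrm B},T^{\mathrm{FB}}$ similarly. A König argument from strong connectivity gives an infinite forward SAW from every vertex, so each $T^{\mathrm F}(s_i)$ is infinite; applying it to the reversed graph $\Grev$ gives an infinite reverse SAW into every vertex, so each $T^{\mathrm B}(s_i)$ is infinite; that each $T^{\mathrm{FB}}(s_i)$ is infinite will follow from the last step.

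\emph{Subperiodicity and growth.} Each of $T,T^{\mathrm F},T^{\mathrm B},T^{\mathrm{FB}}$ is subperiodic. Indeed, let $w$ be a level-$n$ vertex with $n\ge1$ — a SAW of the relevant type from some $s_i$ ending at $u$ — and write a generic descendant as $wc$, where $c$ starts at $u$ and $V(w)\cap V(c)=\{u\}$; pick an automorphism $\phi$ with $\phi u=s_j$. Then $wc\mapsto\phi c$ is an injective tree-homomorphism of the subtree below $w$ into the subtree below the level-$1$ vertex $s_j$, carrying $w$ to $s_j$, and one has only to check that $\phi c$ has the right type. This is vacuous for $T$; for $T^{\mathrm F}$ it holds because a forward extension $wc\tau$ yields the forward extension $c\tau$ of $c$ from $u$. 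For $T^{\mathrm B}$ and $T^{\mathrm{FB}}$ one splices: an infinite reverse SAW $\beta$ into $s_i$ witnessing backward extendability of $wc$ (so $\beta$ avoids $V(wc)\setminus\{s_i\}$) makes $\beta w$ an infinite reverse SAW into $u$ avoiding $V(c)\setminus\{u\}$, hence witnessing backward extendability of $c$; and concatenating a backward and a forward witness of a doubly extendable $wc$ through $w$ yields a doubly infinite SAW through $u$ containing $c$. Thus Theorem~\ref{furst} applies and $\gr(T),\gr(T^{\mathrm F}),\gr(T^{\mathrm B}),\gr(T^{\mathrm{FB}})$ exist and equal the respective branching numbers. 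Since $\s_n(v)$ and its decorated versions depend on $v$ only through its transitivity class, $\s_n=\max_i\s_n(s_i)\le|W_{n+1}(T)|=\sum_i\s_n(s_i)\le k\,\s_n$, and likewise for the other three types; taking $(n+1)$st roots gives $\gr(T)=\mu$, $\gr(T^{\mathrm F})=\muF$, $\gr(T^{\mathrm B})=\muB$, $\gr(T^{\mathrm{FB}})=\muFB$, so in particular the last three limits exist.

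\emph{Equal branching numbers.} By König's lemma the subtree of $T$ below $w$ is infinite iff $w$ is forward extendable, so $T^{\mathrm F}$ is precisely $T$ with all finite branches deleted. The same holds for $T^{\mathrm{FB}}$ inside $T^{\mathrm B}$: a doubly extendable $w$ plainly has infinitely many backward-extendable descendants in $T^{\mathrm B}$, while if the subtree of $T^{\mathrm B}$ below $w$ is infinite then König gives an infinite forward continuation $c$ of $w$ with every prefix backward extendable, and a second König argument — on the tree of reverse SAWs into the initial vertex of $w$ that avoid $w$, pruned at each level to those also avoiding the matching prefix of $c$ — produces an infinite reverse SAW avoiding all of $c$, whence $w$ lies on a doubly infinite SAW. (Taking $w$ to be the $0$-step walk at $s_i$ shows $s_i$ lies on a doubly infinite SAW, so $T^{\mathrm{FB}}(s_i)$, being the infinite part of the infinite tree $T^{\mathrm B}(s_i)$, is infinite.) Deleting finite branches from a tree changes neither the probability that the root lies in an infinite percolation cluster nor, by \eqref{perc}, the branching number; hence $\br(T)=\br(T^{\mathrm F})$ and $\br(T^{\mathrm B})=\br(T^{\mathrm{FB}})$. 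Assembling the three steps, $\mu=\gr(T)=\br(T)=\br(T^{\mathrm F})=\gr(T^{\mathrm F})=\muF$ and $\muB=\gr(T^{\mathrm B})=\br(T^{\mathrm B})=\br(T^{\mathrm{FB}})=\gr(T^{\mathrm{FB}})=\muFB$.

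I expect the main obstacle to be the two-stage König argument of the last step: one must force the backward extension to avoid not merely each finite prefix of the forward ray $c$ but all of $c$, which requires the observation that the family of admissible reverse SAWs shrinks monotonically along $c$, permitting a final application of König. The splicing verifications in the subperiodicity step for the backward and doubly extendable cases are the other place where directedness of $G$ genuinely costs something; for undirected $G$ the reverse witnesses transfer verbatim and those checks become routine.
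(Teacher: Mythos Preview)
Your proof is correct and follows essentially the same route as the paper: build the SAW trees $T,\TF,\TB,T^{\mathrm{FB}}$, invoke Furstenberg's theorem for subperiodic trees, and identify $\TF$ (\resp\ $T^{\mathrm{FB}}$) as $T$ (\resp\ $\TB$) with finite bushes removed, so that the branching numbers coincide. The only noteworthy differences are that the paper obtains existence of the limits by submultiplicativity rather than via Furstenberg, and that the paper asserts without proof the two points you work hardest on (subperiodicity of $\TB,T^{\mathrm{FB}}$ via splicing, and the two-stage K\"onig argument that a backward-extendable SAW with infinite $\TB$-subtree is doubly extendable); your supplied details are sound.
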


\begin{proof}
It is standard that $\si_n$ satisfies the submultiplicative inequality
$$
\s_{m+n} \le \s_m \s_n,
$$
and it is easy to see that the sequences $\siF_n$, $\siB_n$, $\siFB_n$
satisfy the same inequality. The existence of the constants $\muF$, $\muB$,
$\muFB$ follows by the subadditive limit theorem (see, for example,
\cite[Ex.~3.9]{LyP}).

Fix $v\in V$. From $G$ we construct the rooted \df{SAW tree} $T(v)$ as
follows. The vertices of $T(v)$ are the finite SAWs from $v$, with the
trivial walk of length $0$ being the root. Two vertices of $T(v)$ are
declared adjacent if one walk is an extension of the other by exactly one
step.

Let $S$ be a set of vertices comprising one representative of each
transitivity class of $G$, and let
$$
T:=\bigvee_{s\in S} T(s)
$$
be the rooted tree obtained from disjoint copies of the trees $T(s)$ by
joining their roots to one additional vertex $o$, which is designated the
root of the resulting tree. (In the case of transitive $G$, the argument may
be simplified by instead taking $T=T(v)$ for any fixed $v$.) The level set
$W_n$ of $T$ has size
$$
|W_n|=\sum_{s\in S} \s_{n-1}(s),
$$
and hence,
$$
\mu=\liminf_{n\to\infty} \sigma_{n-1}^{1/n}
\leq \underline{\gr}(T) \leq \overline{\gr}(T) \leq
\limsup_{n\to\infty} (|S|\sigma_n)^{1/n}=\mu
$$
so that $\gr(T)=\mu$.

Define $\TF(v)$ to be the \df{forward SAW tree} constructed in an identical
manner to $T(v)$ from all forward-extendable SAWs on $G$ from $v$. Observe
that $\TF(v)$ is precisely the tree obtained from $T(v)$ by removing all
finite bushes, i.e., removing (together with its incident edges) each vertex
$w$ whose rooted subtree $T(v)^w$ is finite. We similarly define
$\TF=\bigvee_{s\in S} \TF(s)$, and note that by the above argument,
$\gr(\TF)=\muF$.

Since both $T$ and $\TF$ are $1$-subperiodic, Theorem~\ref{furst} applies to give
$$
\br(T)=\gr(T),\qq \br(\TF)=\gr(\TF).
$$
On the other hand, since branching number is unaffected by the removal of finite bushes
(by the definition of branching number or by \eqref{perc}), we have $\br(T)=\br(\TF)$.
Thus $\mu=\muF$.

An identical argument gives the equality $\muB=\muFB$: removing all finite bushes
from the \df{backward SAW tree} gives precisely the \df{doubly extendable SAW tree}
(where these objects are defined by obvious analogy with the previous cases).
Thus the two trees have equal branching numbers, whence by Theorem~\ref{furst}
 they have equal growths.
\end{proof}

Our next proof employs similar methods.

\begin{proof}[Proof of Theorem~\ref{main}(ii)]
As in the proof of Lemma~\ref{half} above,
let $\TF:=\bigvee_{s \in S} \TF(s)$,
where $\TF(v)$ is the forward SAW tree from $v$ and $S$ is a set of
representatives of the transitivity classes of $G$.
As argued in the previous proof we have $\br(\TF)=\gr(\TF)=\muF$.
By the definition of branching number (or \eqref{perc}),
\begin{equation}\label{4.1}
\br(\TF) = \max_{s \in S} \br(\TF(s)).
\end{equation}
Therefore there exists $t \in S$ such that $\br(\TF(t))= \muF$.

For every vertex $v$ of $G$ we have $\br(\TF(v))\leq \muF$.  Call $v$
\df{good} if equality holds, or \df{bad} if the inequality is strict.
We showed above that good vertices exist.  We will see that in fact there are no bad vertices.

For any good vertex $u$, construct the `pruned' tree
$\hTF(u)$ from $\TF(u)$ by removing the subtree
$\TF(u)^w$ rooted at each vertex $w$ of $\TF(u)$ that corresponds to a bad vertex of $G$
(i.e., that represents a walk from $u$ ending at a bad vertex).
Since each removed
subtree has branching number less than $\muF-\epsilon$ for some fixed
$\epsilon>0$ depending only on the graph, we have
(by the definition of branching number or \eqref{perc}) that
$$
\br(\hTF(u))=\br(\TF(u))=\muF.
$$
By Lemma \ref{lem:1},
\begin{equation}\label{4.2}
\underline\gr(\hTF(u)) \ge \br(\hTF(u))=\muF.
\end{equation}

Let $\widehat G$ be the subgraph of $G$ induced by the set of all good
vertices, and observe that $\hTF(u)$ is precisely the forward SAW tree from
$u$ on $\widehat G$.  Thus \eqref{4.2} gives that for any good
$u$,
\begin{equation}\label{4.3}
\liminf_{n\to\infty} \siF_n(u,\widehat G)^{1/n}\geq \muF.
\end{equation}

\enlargethispage*{1.5cm}
Finally, from any vertex $v$ of $G$ there exists a
finite directed walk to some good vertex. Take such a walk of minimum length,
say length $d$ and ending at $u$.  Then
$$
\siF_n(v,G)\geq \siF_{n-d}(u, \widehat G),
$$
so by \eqref{4.3} we have $\liminf \siF_n(v)^{1/n}\geq\muF$.  Since by
definition of $\muF$ we have $\limsup \siF_n(v)^{1/n}\leq\muF$, the result follows.
\end{proof}

\begin{proof}[Alternative proof of Hammersley's result \eqref{connconst}]
The above proof goes through with each $\TF(s)$ replaced by the ordinary SAW tree $T(s)$,
and with $\muF$ replaced by $\mu$.
\end{proof}

\section{The unimodular case}
\label{sec:unimodular}

The proof of the remaining equality of Theorem~\ref{main}(i) is further
divided into two cases according to whether or not the graph is unimodular.
In the former case, a stronger statement holds.  Let $\ovG$ be the directed
graph obtained by reversing all edges of $G$.  Recall that
$\siF_n(G):=\sup_{v\in V} \siF_n(v,G)$, and similarly for $\siB_n$.

\begin{lemma}\label{reverse}
Under the assumptions of Theorem~\ref{main}, suppose in
addition that $G$ is unimodular.  There exists $C=C(G)\ge 1$
such that
$$
C^{-1}\leq \frac{\siF_n(G)}{\siB_n(\ovG)}\leq C, \qq n \ge 0.
$$
If in addition $G$ is transitive then we may take $C=1$.
\end{lemma}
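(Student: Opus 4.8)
The plan is to exploit the evident bijection between forward-extendable SAWs on $G$ and backward-extendable SAWs on $\ovG$, and then to use the mass-transport principle (Theorem~\ref{mt}) to pass from $\sup_v \siF_n(v,G)$ to $\sup_v \siB_n(v,\ovG)$ despite the fact that the suprema may be attained at different vertices. First I would observe that reversing a directed walk gives a length-preserving bijection between directed SAWs on $G$ and directed SAWs on $\ovG$, and that under this bijection a walk $w$ from $u$ to $v$ is forward extendable on $G$ (i.e. an initial segment of a singly infinite SAW out of $u$) if and only if its reversal $\ovG$-walk from $v$ to $u$ is backward extendable on $\ovG$ (i.e. a final segment of a singly infinite SAW coming in to $u$). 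Hence, vertex by vertex, $\siF_n(u,G)=\siB_n(u,\ovG)$ for every $u$ and every $n$. Note that $\Aut(\ovG)=\Aut(G)$, the transitivity classes and stabilisers are the same, and $G$ is unimodular iff $\ovG$ is; so the weight function $M$ is common to both graphs.

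If $G$ is transitive this already finishes the proof with $C=1$, since then $\siF_n(G)=\siF_n(u,G)=\siB_n(u,\ovG)=\siB_n(\ovG)$ for any $u$. For the quasi-transitive case the remaining issue is that $\siF_n(G)=\sup_{v\in V}\siF_n(v,G)$ is a supremum over all vertices, not just over a transversal $S$ of the transitivity classes; but since $\siF_n(\phi v, G)=\siF_n(v,G)$ for every automorphism $\phi$, the supremum is in fact a maximum over $S$, a finite set. So it suffices to compare $\max_{s\in S}\siF_n(s,G)$ with $\max_{s\in S}\siB_n(s,\ovG)=\max_{s\in S}\siF_n(s,G)$ — which is an equality! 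Thus even in the quasi-transitive case the two quantities are literally equal, and one may take $C=1$ throughout. The cleaner statement to prove is therefore $\siF_n(G)=\siB_n(\ovG)$ for all $n\ge 0$.

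Given this, the role of unimodularity and of the mass-transport principle deserves a second look: the above argument used only automorphism-invariance of $\siF_n(\cdot,G)$ and finiteness of the number of transitivity classes, not unimodularity. I would double-check whether the intended statement is rather a comparison of $\siF_n(G)$ with $\siB_n(G)$ (same graph, edges not reversed), where the bijection is no longer available and a genuine mass-transport argument is needed: one sends unit mass along each $n$-step forward-extendable SAW from its start to its end and applies Theorem~\ref{mt} with the weight function $M$, obtaining $\sum_{s\in S}M(s)^{-1}\siF_n(s,G)=\sum_{s\in S}M(s)^{-1}\siB_n'(s,G)$ where $\siB_n'(s,G)$ counts $n$-step SAWs ending at $s$ that are backward extendable; bounding the ratios of the finitely many positive weights $M(s)$ by a constant $C=C(G)$ then yields the two-sided bound, collapsing to $C=1$ when $G$ is transitive (single class, $M$ constant). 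The main obstacle is precisely this bookkeeping: making sure the mass-transport function $m(u,v)$ is automorphism-invariant and correctly identifies "number of extendable SAWs starting (resp.\ ending) at a vertex", and tracking which supremum (over $V$ versus over $S$) appears where, since that determines whether unimodularity is actually being used.
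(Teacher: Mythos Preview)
Your bijection is correct up to the point where you write ``Hence, vertex by vertex, $\siF_n(u,G)=\siB_n(u,\ovG)$''. That conclusion does not follow, and the equality is false in general. As you yourself say, the reversal of a forward-extendable $G$-SAW from $u$ to $v$ is a backward-extendable $\ovG$-SAW \emph{from $v$ to $u$}: the starting vertex changes under the bijection. Summing over the free endpoint, the bijection gives
\[
\siF_n(u,G)\;=\;\#\{\text{backward-extendable }\ovG\text{-SAWs of length }n \text{ ending at }u\},
\]
whereas $\siB_n(u,\ovG)$ counts backward-extendable $\ovG$-SAWs \emph{starting} at $u$. Equivalently (still via your bijection), $\siB_n(u,\ovG)$ equals the number of forward-extendable $G$-SAWs \emph{ending} at $u$. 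There is no a~priori reason for the number of forward-extendable $G$-SAWs starting at $u$ to match the number ending at $u$; in a non-unimodular transitive graph these can differ, so unimodularity is genuinely required, and your conclusion that ``even in the quasi-transitive case the two quantities are literally equal'' is not justified.

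The mass-transport argument you sketch in your final paragraph is exactly what bridges this gap and is essentially the paper's proof. With $m(u,v)$ equal to the number of length-$n$ forward-extendable $G$-SAWs from $u$ to $v$, one has $\sum_v m(s,v)=\siF_n(s,G)$ and $\sum_u m(u,s)=\siB_n(s,\ovG)$ (not ``backward extendable on $G$'', as you wrote; the mass received counts \emph{forward}-extendable walks ending at $s$, which is precisely $\siB_n(s,\ovG)$ by the bijection above). Theorem~\ref{mt} then yields $\sum_{s\in S}M(s)^{-1}\siF_n(s,G)=\sum_{s\in S}M(s)^{-1}\siB_n(s,\ovG)$, and bounding the finitely many weight ratios gives the two-sided inequality with $C=c|S|$, collapsing to $C=1$ when $|S|=1$. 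So your second-paragraph instinct was right, but for the lemma as actually stated, not for a different one.
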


\begin{proof}
Let $S$ be a set of representatives of the transitivity classes of $G$, and
let $M$ be the weight function as in \eqref{weight}.  There exists $c\geq1$
such that $c^{-1}\leq M(s)/M(s')\leq c$ for all $s,s'\in S$, with $c=1$ in
the transitive case. By Theorem~\ref{mt} with $m(u,v)$ defined to be the
number of length-$n$ forward-extendable walks from $u$ to $v$ on $G$,
$$\sum_{s\in S} M(s)^{-1} \siF_n(s,G)=\sum_{s\in S} M(s)^{-1}
\siB_n(s,\ovG).$$ We deduce the claimed inequalities with $C=c|S|$.
\end{proof}

If $G$ is an \emph{undirected} unimodular graph (where as usual we interpret
an undirected edge as a pair of edges with opposite orientations), then
$\ovG$ and $G$ are isomorphic, so Lemma~\ref{reverse} immediately gives
$\muB=\muF$, establishing Theorem~\ref{main} in this case.

A directed graph $G$ need not be isomorphic to $\ovG$: an infinite, transitive,
unimodular counterexample is given in Figure~\ref{snub}.
(A finite counterexample may be obtained by orienting the snub
cube in a similar manner.)  Nonetheless, we obtain a simple proof of
Theorem~\ref{main} in the unimodular case, as follows.
\begin{figure}
\begin{center}
\includegraphics[width=0.6\textwidth]{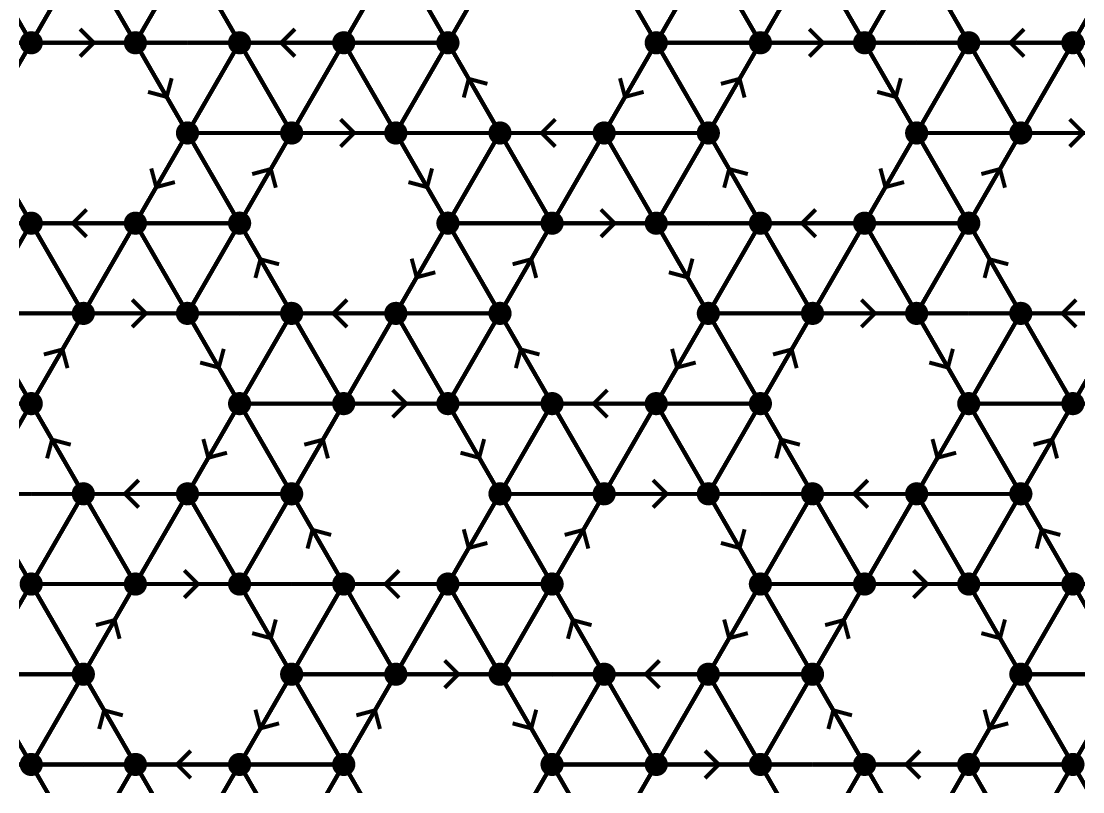}
\end{center}
\caption{An infinite, transitive, directed graph $G$ that is not isomorphic
to its edge-reversal $\protect\ovG$.  An undirected edge is interpreted
as a pair of edges with opposite orientations.}\label{snub}
\end{figure}

\begin{proof}[Proof of Theorem~\ref{main}(i), unimodular case]
Suppose $G$ is a  unimodular graph.  By Lemma \ref{reverse}, $\muF(G)=\muB(\ovG)$, so by
Lemma~\ref{half},
$$
\muB(G)\leq\mu(G)=\muF(G)=\muB(\ovG).
$$
We apply the same argument to $\ovG$, noting that $\oovG=G$, to
obtain $\muB(\ovG) \le \muB(G)$, so that equality holds throughout.
Combined with Lemma~\ref{half}, this concludes the proof.
\end{proof}

\noindent
{\bf Remarks.}
The assumption of strong connectivity is optional
for a unimodular graph $G$. The above proof
of Theorem~\ref{main}(i) is valid if
$G$ is weakly connected in the sense explained after the statement of Theorem~\ref{mt}. If $G$ is not even weakly connected, the same
conclusion holds for each weakly connected component of $G$, and hence for $G$ also.  We remark also that for any transitive $G$ that is weakly connected but not strongly connected, a simple argument shows that {\em all} SAWs are doubly extendable, so that the claims of Theorem 1 hold trivially in this case.

\section{Geodesics}
\label{sec:geo}

By Lemma~\ref{half}, $\mu=\muF$ and $\muB=\muFB$, and all that remains is to
prove the missing equality in the non-unimodular case.  Before doing this we
present a proof in the simpler case when $G$ is undirected. This proof
applies to both unimodular and non-unimodular undirected graphs, but the
subsequent proof for the directed case requires non-unimodularity.

In an undirected graph $G$, a singly infinite walk with vertex sequence $(v_i)_{i\geq 0}$
is called a \df{geodesic} if for all $i,j\geq 0$, the graph-distance between
$v_i$ and $v_j$ is $|i-j|$. By a standard compactness argument
(see, for example, \cite[Thm~3.1]{Wat}), in any infinite, locally finite, connected,
undirected graph there is a geodesic starting from any given vertex.

\begin{proof}[Proof of Theorem~\ref{main}(i) for undirected graphs]
Let $G$ be an un\-directed graph, and let $v \in V$.  Fix a geodesic
$\gamma=(v_i)_{i\geq 0}$ started at $v$.  For a SAW $w$ of length $n$ from
$v$, let $L$ be the largest integer for which $v_L$ lies on $w$.  Let $w^-$
and $w^+$ be the portions of $w$ from $v_L$ to $v$ (reversed), and from $v_L$
to the endpoint of $w$, respectively. See Figure \ref{fig1}. Both $w^-$ and
$w^+$ are backward extendable via the sub-walk $(v_L,v_{L+1},\dots)$ of $\g$.
Since $\g$ is a geodesic, we have that $L\le |w^-|\leq n$. Therefore,
$$
\sigma_n(v) \le \sum_{L=0}^n \sum_{k=L}^n \siB_k(v_L) \siB_{n-k}(v_L),
$$
where $k$ represents $|w^-|$.  By the definition of $\muB$,
for every $\epsilon >0$ there exists $C=C(\epsilon)<\oo$ such that
$\siB_n\leq C(\muB+\epsilon)^n$, and therefore
$$
\si_n(v) \le C^2(n+1)^2 (\muB+\eps)^n, \qq n \ge 1,
$$
so that $\mu \le \muB$.  Clearly $\muB\leq \mu$, so combining with
Lemma~\ref{half} gives the result.
\end{proof}

\begin{figure}\normalsize
\begin{center}
	\psfrag{v}{$v$}
	\psfrag{vL}{$v_L$}
    \psfrag{g}{$\gamma$}
	\psfrag{wm}{$w^-$}
	\psfrag{wp}{$w^+$}
\includegraphics[width=0.7\textwidth]{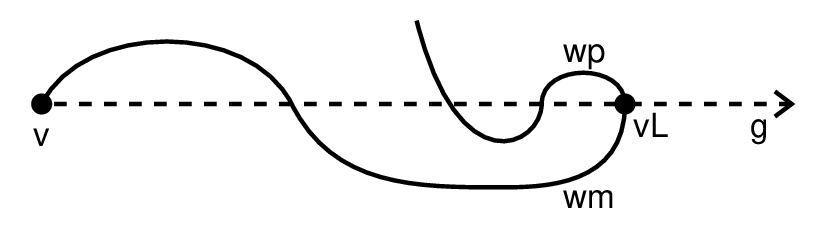}
\end{center}
\caption{The proof of $\mu\leq \muB$ for undirected graphs: both portions
$w^-,w^+$ of the walk $w$ (solid) are backward extendable via the geodesic $\gamma$ (dashed).}\label{fig1}
\end{figure}

Returning to the directed case, we will use the following concept. For
$\alpha>0$, an $\alpha$-\df{quasi-geodesic} of a directed graph $G$ is a
doubly infinite sequence of vertices $(v_i)=(v_i)_{i\in\ZZ}$ satisfying
$$
d_G(v_i,v_j)\geq \alpha|i-j|\qq \text{for all }
i,j\in\ZZ,
$$
where $d_G$ denotes \emph{undirected} graph-distance on $G$
(that is, the length of a shortest path with edges directed arbitrarily). Note that
an $\alpha$-quasi-geodesic is necessarily self-avoiding.
A sequence $(v_i)_{i\in\ZZ}$ is a \df{quasi-geodesic} if it is
an $\alpha$-quasi-geodesic for some $\alpha>0$.

\begin{lemma}\label{q-geo}
Suppose the assumptions of Theorem~\ref{main} hold and in addition $G$ is not unimodular.
There exists a quasi-geodesic $(v_i)_{i\in\ZZ}$ such that there are directed
edges from $v_{i+1}$ to $v_i$ and from $v_{-i-1}$ to $v_{-i}$ for each $i\geq 0$.
\end{lemma}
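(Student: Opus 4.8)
The plan is to exploit non-unimodularity via the weight function $M$, producing the two required ``inward'' rays at a common vertex.

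\emph{Two preliminary observations.} Since $G$ is not unimodular, $M$ is not constant on some transitivity class, hence not constant on $V$; as $G$ is weakly connected there is thus a directed edge $x\to y$ with $M(x)\ne M(y)$. Every directed edge of a strongly connected graph lies on a directed cycle, around which $\log M$ has zero net increment; so if $x\to y$ is \emph{descending} (meaning $M(x)>M(y)$) then the cycle through it also carries an \emph{ascending} edge (one whose head has larger $M$ than its tail), and conversely --- thus $G$ has directed edges of both kinds. Secondly, $G$ has only finitely many orbits of directed edges, so the set of ratios $M(x)/M(y)$ over directed edges $x\to y$, together with their reciprocals, is finite; let $K\ge1$ be its maximum. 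Telescoping along a shortest path gives $M(u)/M(w)\le K^{d_G(u,w)}$ for all $u,w\in V$. Consequently, \emph{any vertex sequence $(v_i)_{i\in\ZZ}$ with consecutive terms adjacent, along which $\log M$ is strictly monotone with increments of modulus at least some fixed $\eta>0$, satisfies $d_G(v_i,v_j)\ge(\eta/\log K)|i-j|$ and is therefore an $\alpha$-quasi-geodesic, in particular self-avoiding.}

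\emph{The configuration to build.} It suffices to produce a vertex $v_0$, a forward ray $v_0,v_1,v_2,\dots$ carrying directed edges $v_{i+1}\to v_i$ along which $\log M$ strictly increases, and a backward ray $v_0,v_{-1},v_{-2},\dots$ carrying directed edges $v_{-i-1}\to v_{-i}$ along which $\log M$ strictly decreases, with all increments of modulus in a fixed interval $[\eta,\log K]$. Then the forward ray lies in $\{w:M(w)>M(v_0)\}$ and the backward ray in $\{w:M(w)<M(v_0)\}$, so the two meet only at $v_0$; the concatenation $(v_i)_{i\in\ZZ}$ carries the edges demanded by the lemma and has $\log M$ strictly increasing throughout with increments bounded below, hence is a quasi-geodesic by the first paragraph. (Orientations match: on the forward ray $v_{i+1}\to v_i$ runs from larger to smaller $M$, a copy of a descending edge, while on the backward ray $v_{-i-1}\to v_{-i}$ runs from smaller to larger, a copy of an ascending edge.)

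\emph{Constructing the rays.} The basic move: given a strictly descending directed path $P=(p_0\to p_1\to\cdots\to p_k)$ whose endpoints $p_0,p_k$ lie in one transitivity class, pick $\phi\in\Aut(G)$ with $\phi(p_k)=p_0$; its scale $\lambda(\phi):=M(\phi u)/M(u)$ (independent of $u$, since $M$ is automorphism-invariant up to a constant) equals $M(p_0)/M(p_k)>1$, and concatenating the translates $\dots,\phi^2(P),\phi(P),P$ at their matched endpoints yields an infinite directed path ending at $p_k$ along which $M$ strictly decreases, with all one-step increments of $\log M$ among the single-edge increments of $P$ (the cross-block increment telescopes to $\log(M(p_{k-1})/M(p_k))$). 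Reading this path outward from $v_0:=p_k$ gives a forward ray of the required type, with $\eta=\min_j\log(M(p_j)/M(p_{j+1}))>0$. Symmetrically, a strictly ascending directed path with endpoints in one transitivity class, attached at its larger-$M$ endpoint, yields a backward ray of the required type. So the lemma reduces to exhibiting one transitivity class $\mathcal O$ that admits \emph{both} a strictly descending directed path from $\mathcal O$ to $\mathcal O$ and a strictly ascending one: take $v_0\in\mathcal O$ and translate the descending path to have its smaller-$M$ endpoint at $v_0$ and the ascending path its larger-$M$ endpoint at $v_0$. When the descending edge $c\to d$ above already has $[c]=[d]$, homogeneity of that orbit supplies the forward ray at once, and the cycle argument applied inside the induced subgraph on $[c]$ supplies the ascending path when that subgraph is strongly connected; the remaining cases need genuinely directed input.

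\emph{The main obstacle.} The crux is exhibiting the class $\mathcal O$ in full generality --- equivalently, finding contiguous strictly monotone directed paths of each sign whose endpoints return to one common transitivity class. I expect this to be the hard part, and plan to obtain it by a pigeonhole argument over the finitely many transitivity classes traversed by a long directed path realising a large net change of $\log M$; such a path exists by strong connectivity together with the fact, itself a consequence of non-unimodularity, that $\sup_v M(v)=\infty$ and $\inf_v M(v)=0$ (the ratios $M(v)/M(v_0)$ over an orbit on which $M$ is non-constant form a nontrivial multiplicative subgroup of $\mathbb{Q}_{>0}$). Delicate points are forcing the monotone sub-path to be contiguous and arranging a common endpoint-class for both signs; the separation of the two rays into $\{M>M(v_0)\}$ and $\{M<M(v_0)\}$ and the control of increments through $K$ then complete the argument as above.
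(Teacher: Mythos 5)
Your overall strategy (use the weight function $M$, note that $\log M$ changes by a bounded amount across each edge, and build a doubly infinite path towards $v_0$ along which $M$ changes fast enough to force the quasi-geodesic property) is the right one, and your ``basic move'' of iterating an automorphism $\phi$ with scale $\lambda(\phi)>1$ is exactly the mechanism the paper uses. But there is a genuine gap at the point you yourself flag as ``the main obstacle'': your construction requires, for some single transitivity class $\mathcal O$, a directed path from $\mathcal O$ to $\mathcal O$ along which $M$ is \emph{strictly monotone at every edge}, for each sign. You do not prove such paths exist, and your proposed route does not work: a long directed path with a large net change in $\log M$ need not contain any long \emph{contiguous} monotone sub-path (it can alternate, e.g.\ descend by $2$ then ascend by $1$ repeatedly), so pigeonholing over transitivity classes cannot force contiguity. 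It is also not clear that descending edges can be chained at all: all strictly descending edges could conceivably lead into vertices with no descending out-edges. Since the whole self-avoidance and separation argument ($\{M>M(v_0)\}$ versus $\{M<M(v_0)\}$) rests on this edge-by-edge monotonicity, the proof is incomplete.

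The fix is to drop the monotonicity requirement, which is much stronger than needed. Non-unimodularity gives $u_0,u_1$ in one transitivity class with $M(u_1)=c>1=M(u_0)$; taking $\phi\in\Aut(G)$ with $\phi(u_0)=u_1$ and setting $u_i=\phi^i(u_0)$ yields $M(u_i)=c^i$. Now connect consecutive milestones by $\phi$-translates of two \emph{fixed} shortest directed walks (from $u_1$ to $u_0$, reversed in indexing so edges point towards $w_0$, and from $u_{-1}$ to $u_0$). Along the resulting sequence $M$ need not be monotone, but the milestones are uniformly spaced and $M(u_i)=c^i$, so your Lipschitz bound $M(u)/M(w)\le K^{d_G(u,w)}$ already gives $d_G(w_i,w_j)\ge\beta|i-j|-\gamma$. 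The sequence may fail to be self-avoiding, but since each vertex is visited finitely often one can loop-erase (re-indexing to keep the edges pointing towards $v_0$); loop-erasure only shortens index-distances among surviving vertices, so the affine lower bound persists, and self-avoidance then lets you absorb $\gamma$ into $\beta$. This replaces the unproved existence of monotone class-to-class paths with objects that exist trivially (shortest directed walks, available by strong connectivity).
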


\begin{proof}[Proof of Lemma \ref{q-geo}]
We prove first that there exists $C\geq 1$ such that the weight function $M$ satisfies
\begin{equation}\label{edge-bound}
C^{-1} \leq \frac{M(u)}{M(v)} \leq C, \qquad \langle u,v\rangle\in E.
\end{equation}
Let $v_1,v_2,\dots,v_k$ be representatives of the orbits of $\Aut(G)$, and
denote by $\{w_{ij}: 1 \le j \le d_i\}$ the final endpoints of directed edges emanating from $v_i$.
Let $\langle u,v \rangle \in E$ be directed from $u$ to $v$.
Since $G$ is quasi-transitive, there exists $\g\in \Aut(G)$ and $1\le i \le k$,
$1\le j \le d_i$ such that $\g(v)=v_i$
and $\g(w)=w_{ij}$.  Equation \eqref{edge-bound} follows for some $C \ge 1$
since $M$ is positive and automorphism-invariant up to
a multiplicative constant.

Since $G$ is non-unimodular, we may find two vertices $u_0,u_1$ in the same
transitivity class with unequal weights.  Assume without loss of generality
that $M(u_0)=1$, and that $c := M(u_1)$ satisfies $c>1$.  Let $\phi$ be an
automorphism mapping $u_0$ to $u_1$, and define $u_i=\phi^i(u_0)$ for
$i\in\ZZ$. Since $M$ is automorphism-invariant up to a multiplicative
constant,
\begin{equation}\label{powers}
M(u_i)=c^i,\qquad i\in\ZZ.
\end{equation}
Let $\xi$ be the vertex-sequence of a shortest directed walk from $u_1$ to
$u_0$ (which exists since $G$ is assumed strongly connected), and let $\zeta$
be the vertex-sequence of a shortest directed walk from $u_{-1}$ to $u_0$.
Let $\overline\xi$ denote the sequence $\xi$ in reverse order. Let
$w=(w_i)_{i\in \ZZ}$ be the doubly infinite sequence of
vertices obtained by concatenating the sequences
$$
\dots,\phi^{-2} \zeta,\phi^{-1} \zeta,\zeta,
\overline\xi,\phi^1\overline\xi,\phi^2\overline\xi,\dots
$$
in this order (indexed so that $w_0=u_0$, and omitting the duplicate vertex
where two concatenated sequences meet). Then $w$ forms a doubly infinite path
with its edges directed towards $w_0$, as required for the claimed
quasi-geodesic, but it is not necessarily self-avoiding. By
\eqref{edge-bound} and \eqref{powers} and the fact that the concatenated
walks are bounded in length, there exist $\beta, \gamma >0$ such that
\begin{equation}\label{affine}
d_G(w_i,w_j)\geq \beta|i-j|-\gamma,\qq i,j\in\ZZ.
\end{equation}

We now erase loops from $w$ until we obtain a self-avoiding sequence.  More
precisely, if there exist $a<b$ with $w_a=w_b$, then choose such $a$, $b$
with $|a|+|b|$ minimal (say), and remove $w_{a+1},\dots,w_{b-1},w_b$ from the
sequence. Iterate this indefinitely. Since initially $w$ visited each vertex
only finitely many times, the sequence $(v_i)_{i\in\ZZ}$ of vertices that are
never removed is a self-avoiding sequence. This sequence may furthermore be
indexed so that it still has edges directed towards $v_0$. (We re-index after
each loop-erasure: if the chosen loop $w_a,\ldots,w_b$ does not contain
$w_0$, we preserve the index of $w_0$; if the loop contains $w_0$, we
re-index so that the old $w_a$ becomes the new $w_0$.) Since loop-erasure
does not increase distances along the walk among the vertices that remain,
\eqref{affine} holds with $(v_i)$ in place of $(w_i)$ and the same $\beta$,
$\gamma$.  Since $(v_i)_{i\in\ZZ}$ is self-avoiding we may now adjust $\beta$
so that this inequality holds with $\gamma=0$.
\end{proof}

\begin{proof}[Proof of Theorem~\ref{main}(i), non-uni\-mod\-ular case]\sloppypar
Let $G$ be non-uni\-mod\-ular. By Lemma~\ref{half}, it suffices to prove
$\mu=\muB$, and obviously we have $\muB\leq\mu$. Fix an
$\alpha$-quasi-geodesic $(v_i)_{i\in\ZZ}$ as in Lemma~\ref{q-geo} for some
$\alpha>0$.  Let $w=(w_0,\dots,w_n)$ be the vertex-sequence of an $n$-step
directed SAW starting at $w_0=v_0$.  We will bound the number of such walks
above in terms of $\muB$ by considering various cases. Let $S_+$ be the set
of intersections of $w$ with $\{v_i:i\geq 0\}$, and $S_-$ the set of
intersections of $w$ with $\{v_i:i\leq 0\}$. Note that, if $v_i \in S_+\cup
S_-$ then $|i| \leq n/\alpha$.

Let $\delta\in(0,\frac12)$.
First suppose that $|S_+|\leq \delta n$.  Decompose the walk $w$ into minimal
segments starting and ending with an element of $S_+$, together with
(possibly) a final segment starting in $S_+$.  For each such segment
$w_a,\dots, w_b$, its truncation $w_a,\dots, w_{b-1}$ is
backward extendable via the SAW $\dots, v_{i+2},v_{i+1},v_i$, where
$v_i=w_a$.  See Figure~\ref{qgeo}(i).

\begin{figure}\normalsize
\begin{center}
	\psfrag{v0}{$v_0$}
\includegraphics[width=0.65\textwidth]{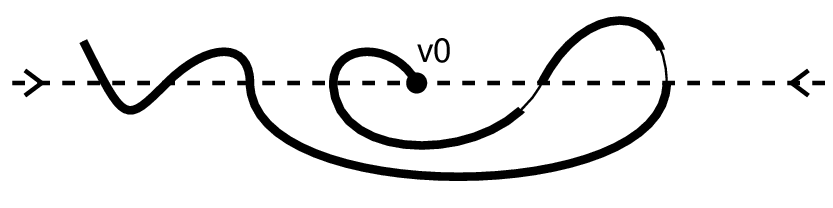}
\\[5mm]
	\psfrag{v0}{$v_0$}
	\psfrag{wa}{$w_a$}
	\psfrag{wb}{$w_b$}
\includegraphics[width=0.65\textwidth]{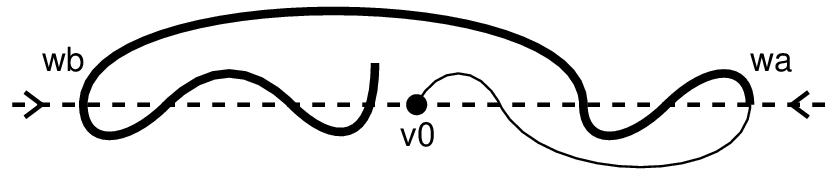}
\end{center}
\caption{Two cases in the proof of $\mu\leq \muB$ for undirected non-unimodular graphs:
(i) the walk $w$ (solid) in the upper figure has few intersections with the right half of the quasi-geodesic (dashed);
(ii) in the lower figure, $w$ has many intersections with both halves of the quasi-geodesic.  In both cases, each
thickened portion of $w$ is backward extendable.}
\label{qgeo}
\end{figure}

If $|S_-|\leq \delta n$ then the walk may similarly be decomposed to give at
most $\delta n$ backward-extendable segments.

Now suppose that $|S_+|,|S_-|> \delta n$.  Let $I_+=\max\{i:v_i\in S_+\}$ and
$I_-=\max\{i:v_{-i}\in S_-\}$, and observe that $I_+,I_->\delta n$.  Thus if
we write $v_{I_+}=w_a$ and $v_{-I_-}=w_b$ then
$$
|a-b|\geq \alpha(I_+  + I_-)>2\alpha\delta n,
$$
since $(v_i)$ is an
$\alpha$-quasi-geodesic.
Writing
$m=\min\{a,b\}$, we deduce that $w_m,w_{m+1},\dots, w_n$ is a
backward-extendable SAW of length greater than $2\alpha\delta n$.
See Figure~\ref{qgeo}(ii).

Combining the various cases, we obtain
\begin{align}\label{bound}
\sigma_n(v_0)  &\leq
2 \sum_{k\in[0,\delta n]}
\binom{\lfloor  n/\alpha\rfloor}{k}  (2\Delta)^k
\sum_{\substack{j_1,\dots,j_k\geq 1:\\j_1+\dots+j_k=n}}
\siB_{j_1-1} \cdots \siB_{j_k-1}\\
&\qq +
2 \lfloor  n/\alpha
\rfloor \sum_{j\in [2\alpha\delta n,n]} \siB_j \sigma_{n-j},
\nonumber
\end{align}
where $\Delta$ denotes the maximum degree of $G$.
Here the first factor of
$2$ reflects the two cases $|S_+|\leq \delta n$ and $|S_-|\leq \delta n$, the
integer $k$ is $|S_+|$ or $|S_-|$, the binomial coefficient gives the number
of choices for $S_+$ or $S_-$ as a subset of the quasi-geodesic, and the
factor $(2 \Delta)^k$ accounts for the choices of directions of segments
along the geodesic and of the omitted edges $(w_{b-1},w_b)$.  In the second
term, the factor $2\lfloor n/\alpha\rfloor$ bounds the possible choices of
the vertex $w_m$, and $j=n-m$.

Inequality \eqref{bound} implies that $\mu \le \muB$, as required. To check this,
assume on the contrary that $\muB < \mu$.
For any $\eps>0$, there exists $C=C(\eps)>0$
such that
$$
\siB_n\leq C (\muB+\epsilon)^n, \qq \si_n \le C(\mu+\eps)^n .
$$
Substituting this into \eqref{bound}, we obtain that
\begin{align*}
\si_n(v_0)
&\le C'n \binom{ n/\alpha}{\de n}\binom n{\de n} \bigl[(\muB+\eps)(2\Delta)^\delta\bigr]^n \\
&\qq +C''n \bigl[(\muB+\eps)^{2\alpha\de}(\mu+\eps)^{1-2\alpha\de}\bigr]^{n}
\end{align*}
where $C',C''$ may depend on $\alpha$ and $\epsilon$,
and the integer-part symbols in the binomial coefficients have been suppressed to simplify the notation.
Therefore, by \eqref{connconst},
$$
\mu \le \max\left\{(\muB+\eps) f(\de),(\muB+\eps)^{2\alpha\de}(\mu+\eps)^{1-2\alpha\de} \right\},
$$
for some $f(\de)$ satisfying $f(\de) \downarrow 1$ as $\de \downarrow 0$.
Let $\eps\downarrow 0$.
Since $\mu/\muB>1$ by assumption, this is a contradiction for small positive $\de$.
\end{proof}

\section*{Acknowledgements}
This work was supported in part by the Engineering and Physical Sciences
Research Council under grant EP/103372X/1. It was begun during a visit by GRG
to the Theory Group at Microsoft Research, where the question of forward
extendability was posed by the audience at a lunchtime seminar devoted to the
results of \cite{GrL1}.  We thank Russell Lyons for many valuable
discussions, and Alan Hammond and Gordon Slade for pointing out relevant
references.

\bibliography{ext}
\bibliographystyle{amsplain}

\end{document}